\newsavebox{\mybox}
\newtheorem{lemma}{Lemma}[section]
\newtheorem{theorem}[lemma]{Theorem}
\newtheorem{proposition}[lemma]{Proposition}
\newtheorem{definition}[lemma]{Definition}
\newtheorem{example}[lemma]{Example}
\newtheorem{Elemento V.9}[lemma]{Elemento V.9}
\newcommand{\uloopr}[1]{\ar@'{@+{[0,0]+(-4,5)}@+{[0,0]+(0,10)}@+{[0,0] +(4,5)}}^{#1}}
\newcommand{\uloopd}[1]{\ar@'{@+{[0,0]+(5,4)}@+{[0,0]+(10,0)}@+{[0,0]+
(5,-4)}}^{#1}}
\newcommand{\dloopr}[1]{\ar@'{@+{[0,0]+(-4,-5)}@+{[0,0]+(0,-10)}@+{[0,
0]+(4,-5)}}_{#1}}
\newcommand{\dloopd}[1]{\ar@'{@+{[0,0]+(-5,4)}@+{[0,0]+(-10,0)}@+{[0,0
]+(-5,-4)}}_{#1}}
\newcommand{\luloop}[1]{\ar@'{@+{[0,0]+(-8,2)}@+{[0,0]+(-10,10)}@+{[0,
0]+(2,2)}}^{#1}}
\definecolor{turquoise2}{rgb}{0,0.898039,0.933333}
\definecolor{magenta}{rgb}{1,0,1}
\definecolor{olivedrab}{rgb}{0.419608,0.556863,0.137255}
\definecolor{purple2}{rgb}{0.568627,0.172549,0.933333}
\definecolor{amethyst}{rgb}{0.6, 0.4, 0.8}
\definecolor{ao(english)}{rgb}{0.0, 0.5, 0.0}
\definecolor{atomictangerine}{rgb}{1.0, 0.6, 0.4}
\definecolor{amber(sae/ece)}{rgb}{1.0, 0.49, 0.0}
\definecolor{alizarin}{rgb}{0.82, 0.1, 0.26}
\definecolor{auburn}{rgb}{0.43, 0.21, 0.1}
\definecolor{aqua}{rgb}{0.0, 1.0, 1.0}
\begin{document}

\title{The Local-Global Principle in Leavitt Path Algebras\footnotetext{2010 \textit{Mathematics Subject Classification}: 16D25, 16D70;
\textit{Key words and phrases:} Leavitt path algebras.}}
\author{Song\"{u}l ES\.{I}N \\
}
\maketitle





\begin{abstract}
This is a short note on how a particular graph construction on a subset of edges that lead to a subalgebra construction, provided a tool in proving some ring theoretical properties of Leavitt path algebras. 

\end{abstract}

\section{Introduction}
This paper is an expository note publicizing how a particular subalgebra construction 
which first appeared in the paper \cite{RegCond} by G. Abrams and K.M. Rangaswamy was used in proving many theorems on Leavitt path 
algebras. The power of the subalgebra construction relies on extending a particular property on a Leavitt path algebra over a 
"smaller" graph to the Leavitt path algebra of the whole graph. This can be visualised as from a local view to a global setting,  
"local-to-global jump". 

We start by recalling the definitions of a path algebra and a Leavitt path
algebra, (see \cite{AAS} for a more extended study on Leavitt path algebras). 
A \textit{directed graph} $E=(E^{0},E^{1},r,s)$
consists of two countable sets $E^{0},E^{1}$ and functions 
$r,s:E^{1}\rightarrow E^{0}$. The elements $E^{0}$ and $E^{1}$ are called 
\textit{vertices} and \textit{edges}, respectively. For each $e\in E^{0}$, 
$s(e)$ is the source of $e$ and $r(e)$ is the range of $e.$ If $s(e)=v$ and 
$r(e)=w$, then we say that $v$ emits $e$ and that $w$ receives $e$. A vertex
which does not receive any edges is called a \textit{source,} and a vertex
which emits no edges is called a \textit{sink.} A graph is called \textit{
row- finite} if $s^{-1}(v)$ is a finite set for each vertex $v$. For a
row-finite graph the edge set $E^{1}$ of $E~$is finite if its set of
vertices $E^{0}$ is finite. Thus, a row-finite graph is finite if $E^{0}$ is
a finite set.

A path in a graph $E$ is a sequence of edges $\mu =e_{1}\ldots e_{n}$ such
that $r(e_{i})=s(e_{i+1})$ for $i=1,\ldots ,n-1.$ In such a case, $s(\mu
):=s(e_{1})$ is the \textit{source }of $\mu $ and $r(\mu ):=r(e_{n})$ is the 
\textit{range} of $\mu $, and $n$ is the \textit{length }of $\mu ,$ i.e., 
$l(\mu )=n.$

If $s(\mu )=r(\mu )$ and $s(e_{i})\neq s(e_{j})$ for every $i\neq j$, then 
$\mu $ is called a \textit{cycle}. If $E$ does not contain any cycles, $E$ is
called \textit{acyclic}. For $n\geq 2,$ define $E^{n}$ to be the set of paths of length $n,$ and 
$E^{\ast }=\bigcup\limits_{n\geq 0}E^{n}$ the set of all finite paths.
Denote by $E^{\infty }$ the set of all infinite
paths of $E$, and by $E^{\leq \infty }$ the set $E^{\infty }$ together with
the set of finite paths in $E$ whose range vertex is a sink.  We say that a
vertex $v\in E^{0}$ is \textit{cofinal} if for every $%
\gamma \in E^{\leq \infty }$ there is a vertex $w$ in the path $\gamma $
such that $v\geq w$. We say that a graph $E$ is cofinal if every vertex in $%
E $ is cofinal.

The path $K$-algebra over $E$ is defined as the free $K$-algebra $
K[E^{0}\cup E^{1}]$ with the relations:

\begin{enumerate}
\item[(1)] $v_{i}v_{j}=\delta _{ij}v_{i}$ \ for every $v_{i},v_{j}\in E^{0}.$

\item[(2)] $e_{i}=e_{i}r(e_{i})=s(e_{i})e_{i}$ $\ $for every $e_{i}\in
E^{1}.$
\end{enumerate}

This algebra is denoted by $KE$. Given a graph $E,$ define the extended
graph of $E$ as the new graph $\widehat{E}=(E^{0},E^{1}\cup (E^{1})^{\ast
},r^{\prime },s^{\prime })$ where $(E^{1})^{\ast }=\{e_{i}^{\ast
}~|~e_{i}\in E^{1}\}$ and the functions $r^{\prime }$ and $s^{\prime }$ are
defined as 
\begin{equation*}
r^{\prime }|_{E^{1}}=r,~~~~s^{\prime }|_{E^{1}}=s,~~~~r^{\prime
}(e_{i}^{\ast })=s(e_{i})~~~~~~\text{and~~~~~}s^{\prime }(e_{i}^{\ast
})=r(e_{i}).
\end{equation*}
The Leavitt path algebra of $E$ with coefficients in $K$ is defined as the
path algebra over the extended graph $\widehat{E},$ with relations:

\begin{enumerate}
\item[(CK1)] $e_{i}^{\ast }e_{j}=\delta _{ij}r(e_{j})$ \ for every $e_{j}\in
E^{1}$ and $e_{i}^{\ast }\in (E^{1})^{\ast }.$

\item[(CK2)] $v_{i}=\sum_{\{e_{j}\in
E^{1}~|~s(e_{j})=v_{i}\}}e_{j}e_{j}^{\ast }$ \ for every $v_{i}\in E^{0}$
which is not a sink.
\end{enumerate}

This algebra is denoted by $L_{K}(E)$. The conditions (CK1) and (CK2) are
called the Cuntz-Krieger relations. In particular condition (CK2) is the
Cuntz-Krieger relation at $v_{i}$. If $v_{i}$ is a sink, we do not have a
(CK2) relation at $v_{i}$. Note that the condition of row-finiteness is
needed in order to define the equation (CK2).

\bigskip 

Given a graph, we define a new graph built upon the given one that will be necessary for the subalgebra construction. The construction is based on an idea presented by Raeburn and Szyma\'{n}ski in \cite[Definition 1.1]{Raeburn}. Then, we construct several
examples.


\begin{definition} \cite[Definition 2]{RegCond} 
Let $E$ be a graph, and $F$ be a finite set of edges in $E.$ We define $s(F)$
(resp. $r(F)$) to be the sets of those vertices in $E$ which appear as the
source (resp. range) vertex of at least one element of $F.$ We define a
graph $E_{F}$ as follows: 
\begin{equation*}
E_{F}^{0}=F\cup (r(F)\cap s(F)\cap s(E^{1}\backslash F))\cup (r(F)\backslash
s(F)),
\end{equation*}
$$\begin{array}{rcl}
E_{F}^{1} & = & \{(e,f)\in F\times E_{F}^{0}~|~r(e)=s(f)\} \\
& \cup & \lbrack
\{(e,r(e))~|~e\in F\text{ with }r(e)\in (r(F)\backslash s(F))\} \rbrack,
\end{array} $$
and where $s((x,y))=x,$ $r((x,y))=y$ for any $(x,y)\in E_{F}^{1}.$
\end{definition}

\begin{example}\cite[Example 1]{RegCond} \rm
Let $E$ be the rose with $n$-petals graph
$$ \xymatrix{
{\bullet}_v  \ar@(ul,dl)_{y_1} \ar@(l,d)_{y_2} \ar@{.>}@(d,r)  \ar@(ur,ul)_{y_n}    }
$$ 

Let $F=\{y_{1}\}$. Then $E_{F}^{0}=\{y_{1}\}\cup \{v\},$ and 
$E_{F}^{1}=\{(y_{1},y_{1}),(y_{1},v)\}$. Pictorially, $E_{F}$ is given by

$$ \xymatrix{
{\bullet}_{y_1}  \ar@(ul,dl)_{(y_1,y_1)}   \ar@{->}[r]^{(y_1,v)}  & {\bullet}_v  }
$$ 

This example indicates that various properties of the graph $E$ need not
pass to the graph $E_{F}.$ For instance, $E$ is cofinal, while $E_{F}$ is
not. In particular, $L_{K}(E)$ is a simple algebra, while $L_{K}(E_{F})$ is
not.
\end{example}

\begin{example} \rm Let $E$ be the graph
\[ 
\xymatrix{ \ar@{.>}[r] &
\bullet_{v_3}\ar@(u,l)_{f_3} \ar@(u,r)^{g_3} \ar@/_.3pc/[rr]_{e_2} & &  
\bullet_{v_2}\ar@(u,l)_{f_2} \ar@(u,r)^{g_2} \ar@/_.3pc/[rr]_{e_1}  && \bullet_{v_1}\ar@(u,l)_{f_1} \ar@(u,r)^{g_1} }
\]
and $F=\{f_{1},g_{1}\}$. Then, $E_{F}$ is given by 
$$ \xymatrix{
{\bullet}_{f_1}  \ar@(ul,dl)_{(f_1,f_1)}   \ar@/_.5pc/[r]_{(f_1,g_1)}  & {\bullet}_{g_1}  \ar@(ur,dr)^{(g_1,g_1)} \ar@/_.5pc/[l]_{(g_1,f_1)} }
$$

In this example $E$ is not cofinal but 
$E_{F}$ is cofinal. Also, $L_{K}(E)$ is not purely infinite simple while 
$L_{K}(E_{F})$ is.
\end{example}

\begin{example} \rm
Consider the infinite clock graph $E$ with one source which emits countably many edges as follows:
$$ \xymatrix{&{\bullet} & {\bullet}\\
 &{\bullet}_v  \ar@{->}[dr] \ar@{->}[r]^f \ar@{->}[ur]  \ar@{->}[u]  \ar@{.>}[ul]  \ar@{.>}[d] \ar@{}[dl]_{(\aleph)} & {\bullet}_w
 \\  & &{\bullet} }
$$ 
Let $F=\{f\}$ and then $E_{F}$ is
$$ \xymatrix{
{\bullet}_{f}    \ar@{->}[r]^{(f,w)}  & {\bullet}_w  }
$$   
This is an example which shows that both $E$ and $E_{F}$ are acyclic graphs where $F$ is any subset of vertices. 
Actually, if $E$ is any acyclic graph and $F$ any subset of vertices then 
$E_{F}$ is acyclic is proved in \cite[Lemma 1]{RegCond}.
\end{example}

\section{The Subalgebra Construction}

Although in general $E_{F}$ need not be a subgraph of $E$, the Leavitt path
algebras $L_{K}(E_{F})$ and $L_{K}(E)$ are related via a homomorphism which
leads to a subalgebra construction of $L_{K}(E)$.

In \cite[Proposition 1]{RegCond}, for a finite set of edges $F$ in a graph $E$, 
the algebra homomorphism $\theta :L_{K}(E_{F})\rightarrow L_{K}(E)$ having
the properties

\begin{itemize}
\item[(1)] $F\cup F^{\ast }\subseteq \text{Im}(\theta ),$

\item[(2)] If $w\in r(F)$, then $w\in \text{Im}(\theta ),$

\item[(3)] If $w\in E^{0}$ has $s_{E}^{-1}(w)\subseteq F,$ then $w\in \text{Im}(\theta ),$
\end{itemize}

is defined by using the following subsets $G^{0}$ and $G^{1}$ of $L_{K}(E)$

$$\begin{array}{rcl}
G^{0} & = &\{ee^{\ast }~|~e\in F\}\cup \{v-\sum\limits_{f\in F,s(f)=v}ff^{\ast
}~|~v\in r(F)\cap s(F)\cap s(E^{1}\backslash F)\} \\
& \cup &  \{v~|~v\in r(F)\backslash s(F)\}
\end{array} $$
and 
\newpage
$$\begin{array}{rcl}
G^{1} &= &\{eff^{\ast }~|~e,f\in F,s(f)=r(e)\} \\
& \cup & \{e-\sum\limits_{f\in
F,s(f)=r(e)}eff^{\ast }~|~r(e)\in r(F)\cap s(F)\cap s(E^{1}\backslash F) \\
& \cup & \{e\in F~|~r(E)\in r(F)\backslash s(F)\}
\end{array} $$
\bigskip

In particular, $\theta (w)\in G^{0}$ for all vertices in $E_{F}$ and $\theta
(w)\in G^{1}$ for all edges in $E_{F}.$

\bigskip

Let $E$ be any graph, $K$ any field, and $\{a_{1},a_{2},\ldots ,a_{l}\}$ any
finite subset of nonzero elements of $L_{K}(E).$ For each $1\leq r\leq l$
write
\begin{equation*}
a_{r}=k_{c_{1}}v_{c_{1}}+k_{c_{2}}v_{c_{2}}+\ldots
+k_{c_{j(r)}}v_{c_{j(r)}}+\sum%
\limits_{i=1}^{t(r)}k_{r_{i}}p_{r_{i}}q_{r_{i}}^{\ast }
\end{equation*}
where each $k_{j}$ is a nonzero element of $K$, and , for each $1\leq i\leq
t(r),$ at least one of $p_{r_{i}}$ or $q_{r_{i}}$ has length at least $1.$
Let $\ F$ be denote the (necessarily finite) set of those edges in $E$ which
appear in the representation of some $p_{r_{i}}$ or $q_{r_{i}},$ $1\leq
r_{i}\leq t(r),~1\leq r\leq l.$ Now consider the set 
\begin{equation*}
S=\{v_{c_{1}},v_{c_{2}},\ldots ,v_{c_{j(r)}}~|~1\leq r\leq l\}
\end{equation*}
of vertices which appear in the displayed description of $a_{r}$ for some 
$1\leq r\leq l.$ We partition $S$ into subsets as follows:
\begin{equation*}
S_{1}=S\cap r(F),
\end{equation*}
and, for remaining vertices $T=S\backslash S_{1}$, we define 
\begin{eqnarray*}
S_{2} &=&\{v\in T~|~s_{E}^{-1}(v)\subseteq F\text{ and }s_{E}^{-1}(v)\neq
\emptyset \} \\
S_{3} &=&\{v\in T~|~s_{E}^{-1}(v)\cap F=\emptyset \} \\
S_{4} &=&\{v\in T~|~s_{E}^{-1}(v)\cap F\neq \emptyset \text{ and }
s_{E}^{-1}(v)\cap (E^{1}\backslash F)\neq \emptyset \}.
\end{eqnarray*}

\begin{definition}\cite[Definition 3]{RegCond}
Let $E$ be any graph, $K$ any field, and \newline
$\{a_{1},a_{2},\ldots ,a_{l}\}$ any finite subset of nonzero elements of 
$L_{K}(E).$ Consider the notation presented in The Subalgebra Construction.
We define $B(a_{1},a_{2},\ldots ,a_{l})$ to be the $K$-subalgebra of 
$L_{K}(E)$ generated by the set $\text{Im}(\theta )\cup S_{3}\cup S_{4}$.
That is,
\begin{equation*}
B(a_{1},a_{2},\ldots ,a_{l})=<\text{Im}(\theta ),S_{3},S_{4}>.
\end{equation*}
\end{definition}

\begin{proposition}\cite[Proposition 1]{RegCond}
\label{Prop1, 2}Let $E$ be any graph, $K$ any field, and 
$\{a_{1},a_{2},\ldots , a_{l}\}$ any finite subset of nonzero elements of 
$L_{K}(E)$. Let $F$ denote the subset of $E^{1}$ presented in The Subalgebra
Construction. For $w\in S_{4}$ let $u_{w}$ denote the element 
$w-\sum\limits_{f\in F,s(f)=w}ff^{\ast }$ of $L_{K}(E).$ Then

\begin{itemize}
\item[(1)] $\{a_{1},a_{2},\ldots ,a_{l}\}\subseteq B(a_{1},a_{2},\ldots
,a_{l}).$

\item[(2)] $B(a_{1},a_{2},\ldots ,a_{l})=\text{Im}(\theta )\oplus (\oplus
_{v_{i}\in S_{3}}Kv_{i})\oplus (\oplus _{w_{j}\in S_{4}}Ku_{w_{j}}).$

\item[(3)] The collection $\{B(S)~|~S\subseteq L_{K}(E),$ $S$ finite$\}$ is
an upward directed set of subalgebras of $L_{K}(E).$

\item[(4)] $L_{K}(E)=\underrightarrow{\lim }_{\{S\subseteq
L_{K}(E),~S~finite\}}B(S)$.
\end{itemize}
\end{proposition}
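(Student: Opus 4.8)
The plan is to treat the four items in order, deriving (1), (3) and (4) from the listed properties of $\theta$ together with the decomposition in (2), which is the technical core; throughout write $B=B(a_1,\dots,a_l)$ and recall that $\mathrm{Im}(\theta)$ is a subalgebra containing $F\cup F^\ast$. For (1) I would verify that every summand of each $a_r$ lies in $B$. A monomial $p_{r_i}q_{r_i}^\ast$ is a product of elements of $F\cup F^\ast$, so it lies in $\mathrm{Im}(\theta)$ by property (1) together with closure of $\mathrm{Im}(\theta)$ under multiplication. For a vertex term $v_{c_k}\in S$ I run through the partition: if $v_{c_k}\in S_1=S\cap r(F)$ then $v_{c_k}\in\mathrm{Im}(\theta)$ by (2); if $v_{c_k}\in S_2$ then $s_E^{-1}(v_{c_k})\subseteq F$ gives $v_{c_k}\in\mathrm{Im}(\theta)$ by (3); and if $v_{c_k}\in S_3\cup S_4$ it is a declared generator of $B$. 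Hence $a_r\in B$ for every $r$.

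The heart is (2), and I would first record the multiplicative relations among the three generating families. Each generator of $\mathrm{Im}(\theta)$ listed in $G^0\cup G^1$ has its left and right vertex supports in $r(F)\cup s(F)$; since $v_i\in S_3$ satisfies $v_i\notin r(F)$ (as $v_i\in S\setminus S_1$) and $v_i\notin s(F)$ (as $s_E^{-1}(v_i)\cap F=\emptyset$), multiplying $v_i$ against any such generator and using $v\,w=\delta_{vw}v$ yields $v_i x=x v_i=0$ for all $x\in\mathrm{Im}(\theta)$. For $w\in S_4$ a direct computation with CK1 and CK2 shows that $u_w=w-\sum_{f\in F,\,s(f)=w}ff^\ast$ is an idempotent annihilating every vertex image in $G^0$ from both sides; since every element of $\mathrm{Im}(\theta)$ carries a left idempotent from $G^0$, this gives $u_w x=x u_w=0$ for all $x\in\mathrm{Im}(\theta)$, and moreover distinct $u_w$ are orthogonal and $v_i u_w=u_w v_i=0$. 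These relations show that the vector-space sum $\mathrm{Im}(\theta)+\sum_{S_3}Kv_i+\sum_{S_4}Ku_{w_j}$ is closed under multiplication, hence a subalgebra; as it contains $\mathrm{Im}(\theta)$, the vertices of $S_3$, and (via $w=u_w+\sum_{f\in F,\,s(f)=w}ff^\ast$) the vertices of $S_4$, it equals $B$. For directness I would left-multiply a relation $x+\sum_i\alpha_i v_i+\sum_j\beta_j u_{w_j}=0$ (with $x\in\mathrm{Im}(\theta)$) by $v_{i_0}$ and by $u_{w_{j_0}}$; orthogonality collapses these to $\alpha_{i_0}v_{i_0}=0$ and $\beta_{j_0}u_{w_{j_0}}=0$, forcing $\alpha_{i_0}=\beta_{j_0}=0$ and then $x=0$. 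The main obstacle is exactly this computation: checking that $u_w$ is orthogonal to every type of generator of $\mathrm{Im}(\theta)$ is a careful case analysis in CK1/CK2, and one must use the defining condition $s_E^{-1}(w)\cap(E^1\setminus F)\neq\emptyset$ of $S_4$ to guarantee $u_{w_{j_0}}\neq0$ (test it against an edge outside $F$), without which directness would fail.

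For (3) the key is monotonicity in $F$. Writing $F_X,\,S^X$ for the data attached to a finite $X\subseteq L_K(E)$, if $F\subseteq F'$ then every generator of $\mathrm{Im}(\theta_F)$ in $G^0_F\cup G^1_F$ is built from $F\cup F^\ast\subseteq F'\cup (F')^\ast$ and from vertices of $r(F)\subseteq r(F')$, so by (1)--(3) it lies in $\mathrm{Im}(\theta_{F'})$; thus $\mathrm{Im}(\theta_F)\subseteq\mathrm{Im}(\theta_{F'})$. Given finite $X,Y$, set $Z=X\cup Y$, so $F_Z=F_X\cup F_Y\supseteq F_X$ and $S^Z\supseteq S^X$. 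I would then show every generator of $B(X)$ lies in $B(Z)$: first $\mathrm{Im}(\theta_X)\subseteq\mathrm{Im}(\theta_Z)\subseteq B(Z)$; next each $v\in S_3^X$ is a vertex of $S^Z$ which, under the partition for $Z$, lands in $S_1^Z\cup S_2^Z$ (hence in $\mathrm{Im}(\theta_Z)$) or in $S_3^Z\cup S_4^Z$ (hence a generator of $B(Z)$), so $v\in B(Z)$; and each $u_w^X=w-\sum_{f\in F_X}ff^\ast$ lies in $B(Z)$ since $w\in B(Z)$ by the same case analysis while $\sum_{f\in F_X}ff^\ast\in\mathrm{Im}(\theta_Z)$. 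Hence $B(X)\subseteq B(Z)$, and by symmetry $B(Y)\subseteq B(Z)$, so the family is upward directed. Finally (4) is immediate: by (3) the $B(S)$ form a directed system of subalgebras under inclusion, so $\varinjlim_S B(S)=\bigcup_S B(S)\subseteq L_K(E)$, while any $a\in L_K(E)$ lies in $B(\{a\})$ by (1); therefore $L_K(E)=\bigcup_S B(S)=\varinjlim_S B(S)$.
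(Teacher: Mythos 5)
The paper does not actually prove this proposition: it is quoted verbatim from Abrams--Rangaswamy \cite[Proposition 1]{RegCond} and used as a black box, so there is no in-paper argument to compare against. Your reconstruction is correct and follows the route of the cited source: membership of the $a_r$ via the properties (1)--(3) of $\theta$ and the partition $S_1,\dots,S_4$; the ring-direct-sum decomposition via the observation that every element of $\mathrm{Im}(\theta)$ carries a left and right idempotent from $G^{0}$ whose vertex support lies in $r(F)\cup s(F)$, which the $S_3$-vertices and the $u_w$ annihilate; non-vanishing of $u_w$ tested against an edge in $s_E^{-1}(w)\setminus F$; and directedness via monotonicity of $\mathrm{Im}(\theta_F)$ in $F$. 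The only point worth flagging is that $B(S)$ depends on the chosen representation of the elements of $S$, so the identity $F_{X\cup Y}=F_X\cup F_Y$ requires fixing representations compatibly (take for $X\cup Y$ the representations already chosen for $X$ and for $Y$); this is implicit in the original as well and does not affect the argument.
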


\bigskip

Proposition \ref{Prop1, 2}, can be modified to include some more properties of the subalgebra construction in \cite{RegCond}. 
For instance, the morphism $\theta $ in the construction is actually a graded morphism whose image is a
graded submodule of $L_K(E)$ and it also reveals some properties of cycles.

The stronger version of Proposition \ref{Prop1, 2} is given in \cite
{HazratRanga} as Theorem 4.1

\begin{theorem}\cite[Theorem 4.1]{HazratRanga}
\label{Thm4.1, 3}For an arbitrary graph $E$, the Leavitt path
algebra $L_{K}(E)$ is a directed union of graded subalgebras $B=A\oplus
K\epsilon _{1}\oplus \cdots \oplus K\epsilon _{n}$ where $A$ is the image of
a graded homomorphism $\theta$ from a Leavitt path algebra $L_{K}(F_{B})$
to $L_{K}(E)$ where $F_{B}$ a finite graph which depends on $B$, the elements $
\epsilon _{i}$ are homogeneous mutually orthogonal idempotents and $\oplus $
is a ring direct sum. Moreover, if $E$ is acyclic, so is each graph $F_{B}$
and in this case $\theta $ is a graded monomorphism.
\end{theorem}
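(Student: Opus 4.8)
The plan is to build directly on Proposition \ref{Prop1, 2}, which already supplies the scaffolding, and to upgrade its conclusions to the graded, ring-theoretic statement claimed here. Given a finite subset $\{a_1,\dots,a_l\}\subseteq L_K(E)$, I would set $F_B=E_F$ (a finite graph since $F$ is finite), $A=\mathrm{Im}(\theta)$, and take for the idempotents $\epsilon_1,\dots,\epsilon_n$ the elements $v_i$ with $v_i\in S_3$ together with the elements $u_{w_j}=w_j-\sum_{f\in F,\,s(f)=w_j}ff^{\ast}$ with $w_j\in S_4$, so that $n=|S_3|+|S_4|$. Proposition \ref{Prop1, 2}(4) then gives that $L_K(E)$ is the directed union of the subalgebras $B=B(a_1,\dots,a_l)$, part (3) gives upward directedness, and part (2) gives the module decomposition $B=A\oplus(\bigoplus_{v_i\in S_3}Kv_i)\oplus(\bigoplus_{w_j\in S_4}Ku_{w_j})$. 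Thus the remaining work is threefold: (i) promote this to a grading statement, (ii) promote the module direct sum to a ring direct sum, and (iii) treat the acyclic case.

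For (i) I would equip both $L_K(E)$ and $L_K(E_F)$ with their canonical $\mathbb{Z}$-gradings, in which vertices have degree $0$, edges degree $1$, and ghost edges degree $-1$ (see \cite{AAS}). It then suffices to check that $\theta$ carries the homogeneous generators of $L_K(E_F)$ to homogeneous elements of $L_K(E)$ of the same degree. Reading off the defining sets $G^0$ and $G^1$: every vertex image ($ee^{\ast}$, $v-\sum ff^{\ast}$, or $v$) is homogeneous of degree $0$, and every edge image ($eff^{\ast}$, $e-\sum eff^{\ast}$, or $e$) is homogeneous of degree $1$, so the star generators land in degree $-1$. Since $\theta$ is a $K$-algebra homomorphism determined on these generators, it is graded, whence $A=\mathrm{Im}(\theta)$ is a graded subalgebra. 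Each $v_i\in S_3$ is a degree-$0$ idempotent and each $u_{w_j}$ is a difference of degree-$0$ terms, hence homogeneous of degree $0$; therefore $B=A\oplus\bigoplus_i K\epsilon_i$ is a sum of graded subspaces and so is itself a graded subalgebra.

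For (ii), the main step, I would verify the orthogonality relations that turn the module sum into a ring direct sum. That the $\epsilon_i$ are idempotent is a short computation: $v_i^2=v_i$ since $v_i$ is a vertex, and $u_{w_j}^2=u_{w_j}$ follows from $w_j^2=w_j$, $w_j\,ff^{\ast}=ff^{\ast}$, and $ff^{\ast}gg^{\ast}=\delta_{fg}ff^{\ast}$ by (CK1). Mutual orthogonality $\epsilon_i\epsilon_j=0$ for $i\neq j$ holds because the underlying vertices are pairwise distinct, so all cross terms vanish by orthogonality of vertices and (CK1). The crux is $A\epsilon_i=\epsilon_i A=0$: here I would use that $S_3,S_4\subseteq T=S\setminus S_1$, whose vertices lie outside $r(F)$, while the source and range vertices of every generator of $A$ lie in $s(F)\cup r(F)$; for $v_i\in S_3$ one moreover has $v_i\notin s(F)$, and for $w_j\in S_4$ the subtracted term $\sum_{s(f)=w_j}ff^{\ast}$ in $u_{w_j}$ is exactly what cancels the contribution of the $F$-edges out of $w_j$ (this is where the precise shape of $u_{w_j}$ and the four-part partition $S=S_1\sqcup S_2\sqcup S_3\sqcup S_4$ are used). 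Checking $g\epsilon_i=\epsilon_i g=0$ for each generator $g$ of $A$ via (CK1) and extending multiplicatively yields $A\epsilon_i=\epsilon_i A=0$; combined with idempotency and mutual orthogonality, this makes $A$ and each $K\epsilon_i$ two-sided ideals of $B$ with vanishing pairwise products, i.e. $\oplus$ is a ring direct sum.

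Finally, for (iii), if $E$ is acyclic then $E_F=F_B$ is acyclic by \cite[Lemma 1]{RegCond}. To see $\theta$ is a graded monomorphism I would invoke the Graded Uniqueness Theorem for Leavitt path algebras (\cite{AAS}): $\theta$ is graded by (i), and it is nonzero on every vertex of $E_F$, since $ee^{\ast}\neq0$, $v\neq0$, and $v-\sum_{f\in F,\,s(f)=v}ff^{\ast}\neq0$ (multiplying the last by $gg^{\ast}$ for an edge $g\notin F$ with $s(g)=v$, which exists by the defining condition $v\in s(E^1\setminus F)$, returns $gg^{\ast}\neq0$). Hence $\theta$ is injective. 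I expect step (ii) to be the main obstacle: the gradedness in (i) is routine degree bookkeeping and the injectivity in (iii) reduces to a standard uniqueness theorem, whereas the ring direct sum rests on the delicate annihilation computation $A\epsilon_i=0$, which is precisely where the construction of the $u_{w_j}$ and the four-part partition of $S$ earn their keep.
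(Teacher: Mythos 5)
Your proposal is correct and follows exactly the route the paper indicates: the theorem is obtained by upgrading Proposition \ref{Prop1, 2} (taking $F_B=E_F$, $A=\mathrm{Im}(\theta)$, and the idempotents $v_i\in S_3$ and $u_{w_j}$ for $w_j\in S_4$), observing that $\theta$ sends the generators $G^0$, $G^1$ to homogeneous elements of degrees $0$ and $1$ so that everything is graded, checking the annihilation $A\epsilon_i=\epsilon_iA=0$ via (CK1) and the partition of $S$, and invoking \cite[Lemma 1]{RegCond} together with the Graded Uniqueness Theorem in the acyclic case. This matches the argument the paper attributes to \cite[Theorem 4.1]{HazratRanga}, so there is nothing to correct.
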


Moreover, any cycle $c$ in the graph $F_{B}$ gives rise to a cycle $c'$ in $E$ such that if 
$c$ has an exit in $F_{B}$ then $c'$ has an
exit in $E.$ In particular, a cycle in $F_{B}$ is of the form 
$(f_{1},f_{2})(f_{2},f_{3})\ldots (f_{n},f_{1})$ and this case 
$f_{1}f_{2}\ldots f_{n}$ is a cycle in $E$.

\bigskip

Throughout recent literature this subalgebra construction has been a
powerful tool. The first theorem that appears in the literature is the
following:

\begin{theorem}\cite[Theorem 1]{RegCond}
$L_{K}(E)$ is von Neumann regular if and only if $E$ is acyclic. If $E$ is
acyclic, then $L_{K}(E)$ is locally $K$-matricial; that is, $L_{K}(E)$ is
the direct union of subrings, each of which is isomorphic to a finite matrix
rings over $K.$
\end{theorem}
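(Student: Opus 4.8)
The plan is to prove the two implications separately, routing both the ``if'' direction and the structural refinement through the chain \emph{acyclic} $\Rightarrow$ \emph{locally $K$-matricial} $\Rightarrow$ \emph{von Neumann regular}, and to establish the ``only if'' direction in contrapositive form by exhibiting, for each cycle, an element that admits no quasi-inverse. Throughout I would exploit that von Neumann regularity is an element-wise condition, so that it is both inherited by and detected inside the directed pieces supplied by Theorem~\ref{Thm4.1, 3}.

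For the ``if'' direction, suppose $E$ is acyclic and invoke Theorem~\ref{Thm4.1, 3}: $L_K(E)$ is the directed union of graded subalgebras $B = A \oplus K\epsilon_1 \oplus \cdots \oplus K\epsilon_n$, where $A = \mathrm{Im}(\theta)$, each $F_B$ is \emph{finite and acyclic}, and $\theta$ is a graded monomorphism, whence $A \cong L_K(F_B)$. The key input is the classical structure theorem for a finite acyclic graph (see \cite{AAS}): $L_K(F_B) \cong \bigoplus_{w} M_{n_w}(K)$, a finite direct sum of full matrix rings over $K$ indexed by the sinks $w$ of $F_B$. Since each $\epsilon_i$ is an idempotent, $K\epsilon_i \cong K = M_1(K)$, so every $B$ is itself $K$-matricial; taking the directed union shows $L_K(E)$ is locally $K$-matricial, which is the second assertion. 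Finally, full matrix rings over a field are von Neumann regular and a directed union of regular rings is regular --- given $a \in L_K(E)$ one has $a \in B$ for some $B$, and a quasi-inverse $x$ with $a = axa$ may be found inside $B \subseteq L_K(E)$ --- so $L_K(E)$ is regular.

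For the ``only if'' direction I would argue by contraposition: assuming $E$ contains a cycle $c = e_1 \cdots e_n$ based at $v = s(c) = r(c)$, I would produce a non-regular element. A warning that shapes the choice of element: from (CK1) one gets $c^* c = v$, so $c\, c^* c = cv = c$ and hence $c$ \emph{is} regular; the obstruction must instead be sought in an inhomogeneous element such as $v - c$. When the cycle has no exit, repeated use of (CK2) along the cycle gives $cc^* = v$ as well, and the corner $v\, L_K(E)\, v$ is isomorphic to the Laurent polynomial ring $K[x,x^{-1}]$ via $c \mapsto x$, $c^* \mapsto x^{-1}$. In this corner $v - c$ corresponds to $1 - x$, which has no quasi-inverse (a commutative domain that is not a field cannot be regular, and $1-x$ is a non-unit). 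Since a corner $vRv$ of a von Neumann regular ring $R$ is again regular, this contradicts the regularity of $L_K(E)$.

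The main obstacle is the case of a cycle \emph{with} an exit, together with the fact that $L_K(E)$ need not be unital. Here the corner $v\,L_K(E)\,v$ is strictly larger than $K[x,x^{-1}]$ --- one retains $c^* c = v$ while $cc^* = g$ is only a proper sub-idempotent of $v$ --- so the clean Laurent-polynomial identification is no longer available. I would handle this either by a direct degree computation in the $\mathbb{Z}$-grading, writing a putative quasi-inverse of $v-c$ in homogeneous components and showing that the defining relation $(v-c)\,x\,(v-c) = v-c$ is incompatible with the grading, or, more structurally, by passing to a graded quotient of the corner that annihilates the exits and retains a cycle without exit; since quotients and corners of regular rings remain regular, this reduces the general situation to the no-exit computation already carried out. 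Making one of these two reductions fully rigorous, rather than the bookkeeping in the acyclic direction, is where the real work lies.
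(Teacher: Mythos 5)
Your ``if'' direction is, in substance, the proof the paper gives (the paper proves \emph{only} this implication): decompose $L_K(E)$ via the subalgebra construction, observe that the finite graphs $F_B$ (resp.\ $E_F$) inherit acyclicity, apply the finite-dimensional structure theorem \cite[Proposition 3.5]{APS} to get finite direct sums of matrix rings over $K$, absorb the rank-one idempotent summands as copies of $M_1(K)$, and conclude by noting that von Neumann regularity is an element-wise condition preserved under directed unions. The only cosmetic difference is that you route this through Theorem~\ref{Thm4.1, 3} and the injectivity of $\theta$, where the paper works from Proposition~\ref{Prop1, 2}(2) and instead uses simplicity of the $M_{m_i}(K)$ to control the homomorphic image $\mathrm{Im}(\theta)$; both are sound.

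The ``only if'' direction, however, contains a genuine gap, and you say so yourself: the case of a cycle with an exit is exactly ``where the real work lies,'' and neither of your two proposed repairs is carried out. Worse, one of them cannot work as stated. Consider the rose with two petals, $E^0=\{v\}$, $E^1=\{c,e\}$: the cycle $c$ has the exit $e$, but $L_K(E)=L_K(1,2)$ is a \emph{simple} ring, so there is no nonzero quotient (graded or not) that annihilates $e$ while retaining $c$, and since $v$ is the only vertex the corner $vL_K(E)v$ is the whole algebra; the proposed reduction to the no-exit computation collapses to the zero ring. So the entire burden falls on the degree/grading computation for $(v-c)x(v-c)=v-c$, which you have only gestured at; note that the grading alone does not immediately give a contradiction, because $v-c$ is inhomogeneous and the corner in the with-exit case is far from $K[x,x^{-1}]$ (one only has $c^{*}c=v$ with $cc^{*}$ a proper subidempotent). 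Your observations that $c$ itself \emph{is} regular and that the no-exit case reduces to $1-x$ in $K[x,x^{-1}]$ are correct and are the right starting point, but as it stands the converse implication is not proved. Since the paper deliberately omits this direction (it is quoted from \cite[Theorem 1]{RegCond}), you should either do the same or supply the full argument for a non-regular element attached to an arbitrary cycle.
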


Now, we give one implication of the statement to demonstrate how the
subalgebra construction is used in the proof:

\bigskip

\begin{proof}
We assume $E$ is acyclic. Let $\{B(S)~|~S\subseteq L_{K}(E),$ $S$ finite$\}$
be the collection of subalgebras of $L_{K}(E)$ indicated in Proposition \ref
{Prop1, 2}(3). By Proposition \ref{Prop1, 2}(4), it suffices to show that
each such $B(S)$ is of the indicated form. But by Proposition \ref{Prop1, 2}
(2), $B(S)=B(a_{1},a_{2},\ldots ,a_{l})=\text{Im}(\theta )\oplus (\oplus
_{v_{i}\in S_{3}}Kv_{i})\oplus (\oplus _{w_{j}\in S_{4}}Ku_{w_{j}}).$ Since
terms appearing in the second and third summands are clearly isomorphic as
algebras to $K\cong M_{1}(K),$ it suffices to show that $\text{Im}(\theta )$
is isomorphic to a finite direct sum of finite matrix rings over $K.$ Since 
$E$ is acyclic, by Lemma 1 in \cite{RegCond} we have that $E_{F}$ is acyclic.
But $E_{F}$ is always finite by definition, so we have by \cite[Proposition
3.5]{APS}, that $L_{K}(E_{F})\cong \oplus _{i=1}^{l}M_{m_{i}}(K)$ for some $
m_{1},\ldots ,m_{l}$ in $\mathbb{N}$. Since each $M_{m_{i}}(K)$ is a simple ring, we have that any homomorphic
image of $L_{K}(E_{F})$ must have this same form. So we get that $\text{Im}
(\theta )\cong \oplus _{i=1}^{t}M_{m_{i}}(K)$ for some $m_{1},\ldots ,m_{t}$
in $\mathbb{N}$, and we are done. (As remarked previously, since $\theta $ is in fact an
isomorphism we have $t=l.$)
\end{proof}

\bigskip

We list the following theorems which are using the same Subalgebra
Construction in their proofs. In particular, we only quote the parts that
uses the Subalgebra Construction.

\bigskip

\begin{theorem}\cite[Theorem 5.1]{HazratRanga}
Let $E$ be an arbitrary graph. Then for the Leavitt path algebra $L_{K}(E)$
the following are equivalent:

\begin{itemize}
\item[(1)] Every left/right ideal of $L_{K}(E)$ is graded;

\item[(2)] The class of all simple left/right $L_{K}(E)$-modules coincides
with the class of all graded-simple left/right $L_{K}(E)$-modules;

\item[(3)] The graph $E$ is acyclic.
\end{itemize}
\end{theorem}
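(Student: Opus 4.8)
The plan is to prove the equivalences cyclically as $(1)\Rightarrow(2)\Rightarrow(3)\Rightarrow(1)$, treating the two purely module-theoretic implications by general graded ring theory and reserving the subalgebra construction of Theorem~\ref{Thm4.1, 3} for the one step where the geometry of $E$ must be controlled. The guiding idea is that acyclicity should be detected locally on each finite graph $F_B$ and then transported to $L_K(E)$ through the directed union, exactly as in the proof of the regularity theorem above.

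I would begin with $(1)\Rightarrow(2)$, which needs nothing beyond the grading. Let $M$ be a graded-simple module. Since $L_K(E)$ has local units given by finite sums of vertices, any nonzero homogeneous $m\in M$ satisfies $vm=m$ for some $v\in E^{0}$, so $L_K(E)m$ is a nonzero graded submodule and hence all of $M$; thus $M\cong L_K(E)/J$ for a graded left ideal $J$. The submodules of $M$ correspond to the left ideals containing $J$, and by hypothesis every such ideal is graded, so every submodule of $M$ is graded. As $M$ has no proper nonzero graded submodule it has no proper nonzero submodule at all, i.e.\ $M$ is simple; the reverse containment of the two classes is the symmetric observation that a simple module generated by a homogeneous element is automatically graded-simple.

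For $(2)\Rightarrow(3)$ I would argue by contraposition and manufacture, from a cycle, a simple module that admits no compatible grading. If $c=e_{1}\cdots e_{n}$ is a cycle at $v=s(c)$, then for each $\lambda\in K\setminus\{0\}$ Chen's construction produces a simple left module $S_{\lambda}$ on which the element $c-\lambda v$ acts as zero. Because $c-\lambda v$ mixes its degree-$0$ part $\lambda v$ with its degree-$n$ part $c$, the module $S_{\lambda}$ cannot be equipped with a grading compatible with the action, so it is simple but not isomorphic to any graded-simple module; hence $(2)$ fails. The only care here is to read off simplicity and non-gradability of $S_{\lambda}$ directly from the Cuntz--Krieger relations.

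The substance of the argument, and where I expect the real difficulty to lie, is $(3)\Rightarrow(1)$. Here I would write $L_K(E)=\varinjlim B$ as the directed union of graded subalgebras $B=A\oplus K\epsilon_{1}\oplus\cdots\oplus K\epsilon_{n}$ furnished by Theorem~\ref{Thm4.1, 3}, with $A=\mathrm{Im}(\theta)$ the graded image of $L_K(F_{B})$ for a finite graph $F_{B}$. Since $E$ is acyclic, Theorem~\ref{Thm4.1, 3} makes each $F_{B}$ acyclic and $\theta$ a graded monomorphism, so $A\cong L_K(F_{B})$ is a finite direct sum of matrix algebras over $K$, as in the regularity theorem. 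A left ideal $I$ of $L_K(E)$ is the directed union of the left ideals $I\cap B$, so $I$ is graded as soon as each $I\cap B$ is homogeneous in $B$, and the whole problem collapses to a single finite acyclic graph. The main obstacle is precisely this finite step: one must understand the $\mathbb{Z}$-grading that the path-length function induces on $\bigoplus_{i}M_{m_{i}}(K)$ and check that it genuinely forces one-sided ideals to be homogeneous. This is delicate, because an arbitrary $\mathbb{Z}$-grading on a matrix algebra need \emph{not} have all its one-sided ideals graded, so the proof must exploit the very special shape of the grading coming from an acyclic graph --- the boundedness of path lengths, the block decomposition indexed by the sinks, and the cycle-to-cycle correspondence recorded after Theorem~\ref{Thm4.1, 3} --- and I would spend most of my effort verifying exactly this point before transporting gradedness back along $\theta$ and up the directed union.
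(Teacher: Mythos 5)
The paper quotes a proof of only one implication, $(3)\Rightarrow(1)$, and for that implication your outline coincides with the paper's: write $L_K(E)$ as a directed union of graded subalgebras $B=A\oplus K\epsilon_1\oplus\cdots\oplus K\epsilon_n$ via Theorem~\ref{Thm4.1, 3}, use acyclicity to make each $F_B$ finite acyclic so that $A$ is a finite direct sum of matrix algebras over $K$, settle the finite-dimensional case, and then obtain gradedness of an arbitrary left ideal $M$ by writing each $a\in M$ inside some $M\cap B_\lambda$. The difference is that the paper closes the finite-dimensional case in one line --- it cites \cite[Theorem 4.14]{Hazrat2} to say the grading on $\bigoplus_i M_{m_i}(K)$ is elementary (all matrix units homogeneous) and concludes that ``consequently every ideal of each $B_\lambda$ is graded'' --- whereas you stop exactly there and announce that you would spend your effort verifying this point. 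So, taken as a proof, your proposal has a genuine gap: the decisive step is identified but not carried out. (Your sketches of $(1)\Rightarrow(2)$ and $(2)\Rightarrow(3)$ are not addressed by the paper at all; note only that the non-gradability of the Chen module $S_\lambda$, which carries all the weight in $(2)\Rightarrow(3)$, requires excluding \emph{every} possible grading and is likewise left unverified.)

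That said, your hesitation is exactly right, and it cuts against the paper as well as against you: the finite-dimensional step, as literally required, is false. An elementary grading forces every \emph{two-sided} ideal of $\bigoplus_i M_{m_i}(K)$ to be graded (such an ideal is generated by central idempotents $\sum_i e_{ii}$, which are homogeneous of degree $0$), but it does not force one-sided ideals to be graded. Concretely, let $E$ be the acyclic graph with two vertices $v,w$ and a single edge $e$ from $v$ to $w$, so that $L_K(E)\cong M_2(K)$ with $v=e_{11}$, $e=e_{12}$, $e^{\ast}=e_{21}$, $w=e_{22}$ and $\deg e=1$. The left ideal $I=L_K(E)(v+e)$ is spanned by $v+e$ and $w+e^{\ast}$ and contains no nonzero homogeneous element, so $I\neq\bigoplus_{n}(I\cap L_n)$. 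Thus statement $(1)$, read literally for one-sided ideals, already fails for this acyclic graph, and the paper's ``consequently'' is unjustified at precisely the point you flagged. The intended reading must be either that every left ideal is \emph{gradable} (isomorphic as a module to a graded one --- true here, since $I\cong L_K(E)v$), or that the quantifier ranges over two-sided ideals; under either reading the finite step is not something to ``verify'' but something to reformulate, and the passage to the directed union has to be redone accordingly. Your plan, like the paper's proof, does not survive the literal reading of $(1)$.
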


\begin{proof}
$(3)\Rightarrow (1)$ For the sake of simplicity of the notation, let $L:=L_{K}(E)$. 
Suppose $E$ is acyclic. Now, by Theorem \ref{Thm4.1, 3}, $L$
is a direct union of graded subalgebras $B_{\lambda }$ where $\lambda \in I,$
an index set and where each $B_{\lambda }$ is a finite direct sum of copies
of $K$ and a graded homomorphic image of a Leavitt path algebra of a finite
acyclic graph. By \cite[Theorem 4.14]{Hazrat2}, Leavitt path algebras of
finite acyclic graphs are semi-simple algebras which have elementary
gradings, that is, all the matrix units are homogeneous. Consequently, every
ideal of each $B_{\lambda }$ is graded. Let 
$L=\bigoplus\limits_{n\in\mathbb{Z}}L_{n}$ be the $\mathbb{Z}$-graded decomposition of $L.$ 
Since the $B_{\lambda }$ are graded
subalgebras, each $B_{\lambda }=\bigoplus\limits_{n\in \mathbb{Z}}(B_{\lambda }\cap L_{n}).$ 
Let $M$ be a left ideal of $L$. To show that $M$
is graded, we need only to show that $M=\bigoplus\limits_{n\in \mathbb{Z}}(M\cap L_{n}).$ 
Let $a\in M.$ Then, for some $\lambda ,$ $a\in M\cap
B_{\lambda }.$ Note that $M\cap B_{\lambda }=B_{\lambda }$ or a left ideal
of $B_{\lambda }.$ Since every left ideal of $B_{\lambda }$ and in
particular $M\cap B_{\lambda }$ is graded, we can write $a=a_{n_{1}}+\cdots
+a_{n_{k}}$ where
\begin{equation*}
a_{n_{i}}\subset (M\cap B_{\lambda })\cap (B_{\lambda }\cap
L_{n_{i}})\subset M\cap L_{n_{i}}
\end{equation*}
for $i=1,\ldots ,k.$ This show that $M=\bigoplus\limits_{n\in\mathbb{Z}}(M\cap L_{n})$ and 
hence $M$ is a graded left ideal of $L.$
\end{proof}

\bigskip

The next result is about graded von Neumann regular Leavitt path algebras. A
ring $R$ is von Neumann regular if for every $x\in R$ there exists $y\in R$ such
that $x=xyx.$ Moreover, a graded ring $R$ is graded von Neumann regular if each homogeneous element is von Neumann regular.

\bigskip

\begin{theorem}\cite[Theorem 4.2]{HazratRanga}; \cite[Theorem 10]{Hazrat}
Every Leavitt path algebra $L_{K}(E)$ of an arbitrary graph $E$ is a graded
von Neumann regular ring.
\end{theorem}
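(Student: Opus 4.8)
The plan is to prove the statement by running the local-to-global machinery of Theorem \ref{Thm4.1, 3} on the single element whose regularity is in question, thereby reducing graded von Neumann regularity of $L_K(E)$ for an arbitrary graph to the same property for Leavitt path algebras of finite graphs. Recall that $L=L_K(E)$ carries the standard $\mathbb{Z}$-grading $L=\bigoplus_{n\in\mathbb{Z}}L_n$ (vertices in degree $0$, each edge $e$ in degree $1$, each ghost edge $e^{\ast}$ in degree $-1$), and that graded von Neumann regularity means: every homogeneous $x$ admits a homogeneous $y$ with $x=xyx$. Given a homogeneous $x$, I would apply the subalgebra construction to the singleton $\{x\}$. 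By Theorem \ref{Thm4.1, 3}, $x$ then lies in a graded subalgebra $B=A\oplus K\epsilon_{1}\oplus\cdots\oplus K\epsilon_{n}$, where $A=\theta(L_K(F_B))$ is the image of a graded homomorphism from the Leavitt path algebra of a \emph{finite} graph $F_B$, and the $\epsilon_i$ are homogeneous orthogonal idempotents. Since $B$ is a \emph{graded} subalgebra, one has $B_m=B\cap L_m$, so any homogeneous witness $y\in B$ produced for $x$ inside $B$ is automatically homogeneous of the correct degree in $L$ and satisfies $x=xyx$ there. Hence it suffices to show that each such $B$ is graded von Neumann regular.

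Next I would peel off the direct summands. A ring direct sum is graded von Neumann regular if and only if each factor is, since a homogeneous element of $B$ of degree $m$ is a tuple of homogeneous degree-$m$ elements of the factors and can be inverted componentwise. Each homogeneous idempotent $\epsilon_i$ necessarily sits in degree $0$ (from $\epsilon_i=\epsilon_i^{2}$ and $2d=d$ in $\mathbb{Z}$), so $K\epsilon_i$ is a field and is trivially graded von Neumann regular. For the factor $A$, I would use that the class of graded von Neumann regular rings is closed under graded-homomorphic images: lifting a homogeneous element of $A$ along the graded surjection $\theta$, inverting it in $L_K(F_B)$, and pushing the witness back down. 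This leaves exactly one thing to prove, the base case: $L_K(F_B)$ is graded von Neumann regular for a finite graph $F_B$. This is precisely the ``local'' statement that the subalgebra construction is designed to feed into the ``global'' one.

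For the finite base case I would proceed in two stages. First, establish that the degree-zero component $L_K(F_B)_0$ is von Neumann regular: it is the directed union over $m$ of the subalgebras spanned by monomials $pq^{\ast}$ with $\ell(p)=\ell(q)\le m$, and each of these is a finite direct sum of matrix algebras over $K$ indexed by the relevant vertices; thus $L_K(F_B)_0$ is locally matricial and therefore von Neumann regular. Second, I would lift regularity from degree $0$ to an arbitrary homogeneous degree $d$: writing $L_d$ as an $L_0$-bimodule, the goal is to invert, by a homogeneous element of degree $-d$, the $L_0$-linear action attached to a fixed homogeneous $x\in L_d$, using that $L_0$ is regular and that finiteness of $F_B$ keeps all the modules in play finitely generated projective, hence direct summands.

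The hard part will be this lifting step. The naive route — taking $b$ a von Neumann inverse of $x^{\ast}x$ in $L_0$ and setting $y=bx^{\ast}$ — tacitly requires the involution to be \emph{proper}, i.e. $x^{\ast}x=0\Rightarrow x=0$, and this fails over fields such as $\mathbb{F}_2$: for a vertex $v$ carrying two loops $c,d$ the element $x=c+d$ has $x^{\ast}x=2v=0$ while $x\neq 0$. So the homogeneous inverse must be built \emph{without} any positivity assumption. In the two-loop example one checks directly that $y=c^{\ast}$ already gives $xyx=(c+d)c^{\ast}(c+d)=c+d=x$; the general phenomenon behind this is that $x$ induces an $L_0$-module map between finitely generated projective modules that is split precisely because $L_0$ is von Neumann regular, and a splitting supplies the desired $y$. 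Making this inversion canonical and degree-homogeneous across all of $L_d$ — rather than relying on the $\ast$-formula — is the technical crux, and it is exactly here that the finiteness delivered by the subalgebra construction is used.
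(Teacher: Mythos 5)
Your local-to-global reduction is exactly the paper's: both arguments invoke Theorem \ref{Thm4.1, 3} to place a given homogeneous element inside a graded subalgebra $B=A\oplus K\epsilon_{1}\oplus\cdots\oplus K\epsilon_{n}$, dispose of the summands $K\epsilon_{i}$, and use that graded von Neumann regularity passes to graded homomorphic images and to directed unions. Your bookkeeping steps (a nonzero homogeneous idempotent sits in degree $0$; a homogeneous element of a graded subalgebra is homogeneous of the same degree in $L_K(E)$; a surjective graded homomorphism transports homogeneous quasi-inverses) are all correct.

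The gap is in your base case. The paper does not prove that $L_{K}(F_{B})$ is graded von Neumann regular for a finite graph $F_{B}$; it cites \cite[Theorem 10]{Hazrat} for precisely this, and that citation carries the entire analytic content of the theorem. You instead undertake to prove it: you correctly show that $L_{K}(F_{B})_{0}$ is ultramatricial and hence von Neumann regular, and you correctly diagnose that the naive quasi-inverse $y=bx^{\ast}$ breaks down because the involution need not be proper over an arbitrary field (your $\mathbb{F}_2$ two-loop example is apt). But the step that remains --- producing, for each homogeneous $x$ of degree $d$, a homogeneous $y$ of degree $-d$ with $xyx=x$ --- is exactly the part you label a ``technical crux'' and do not carry out. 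As written it is an assertion, not an argument: you do not identify the finitely generated projective $L_{0}$-modules between which $x$ is supposed to induce a map, nor why a splitting of that map is realized by an element of $L_{-d}$; note that $L_{K}(F_{B})$ is strongly graded only when $F_{B}$ has no sinks, so the components $L_{d}$ are not invertible bimodules in general and the lift from degree $0$ is genuinely delicate (in \cite{Hazrat} it is handled by realizing $L_{K}(F_{B})$ as a corner skew Laurent polynomial ring over its zero component rather than by working with $L_{d}$ directly). Either complete that construction or do as the paper does and quote \cite{Hazrat} for the finite case; with that in place the rest of your argument is sound and coincides with the paper's.
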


\begin{proof}\cite[Proof of Theorem 4.2]{HazratRanga}
Suppose $E$ is an arbitrary graph. By \cite[Theorem 4.1]{HazratRanga}, $
L_{K}(E)$ is a directed union of graded subalgebras $B=A\oplus K\epsilon
_{1}\oplus \cdots \oplus K\epsilon _{n}$ where $A$ is the image of a graded
homomorphism $\theta $ from a Leavitt path algebra $L_{K}(F_{B})$ to $
L_{K}(E)$ with $F_{B}$ a finite graph (depending on $B$), the elements $
\epsilon _{i}$ are homogeneous mutually orthogonal idempotents and $\oplus $
is a ring direct sum. Since $F_{B}$ is a finite graph, $L_{K}(F_{B})$ and
hence $B$ is graded von Neumann regular by \cite{Hazrat}. It is then clear
from the definition that the direct union $L_{K}(E)$ is also graded von
Neumann regular.
\end{proof}

\bigskip

Recall that a ring $R$ is called left B\'{e}zout in case every finitely
generated left ideal of $R$ is principal. If the graph $E$ is finite, then 
$L_{K}(E)$ is B\'{e}zout \cite[Theorem 15]{Bezout}. 
The proof of this statement is given via a nice induction argument which we do not quote here. 
The generalization of this result to arbitrary graphs, which again appears in \cite{Bezout}, uses the subalgebra construction.
\bigskip

\begin{theorem}
	\cite[Corollary 16]{Bezout}
Let $E$ be an arbitrary graph and $K$ any field. Then $L_{K}(E)$ is B\'{e}zout.
\end{theorem}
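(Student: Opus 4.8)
The plan is to reduce the claim for an arbitrary graph $E$ to the already-established finite case via the directed-union structure supplied by Theorem \ref{Thm4.1, 3}. The key observation is that the Bézout property is, morally, a \emph{finiteness} property: any finitely generated left ideal of $L_K(E)$ is generated by finitely many elements $a_1,\ldots,a_m$, and each of these lives in some member of the directed family $\{B(S)\}$. Since the family is upward directed (Proposition \ref{Prop1, 2}(3)), all the generators, and hence the entire finitely generated ideal, can be captured inside a single subalgebra $B=A\oplus K\epsilon_1\oplus\cdots\oplus K\epsilon_n$ of the form described in Theorem \ref{Thm4.1, 3}, where $A=\operatorname{Im}(\theta)$ is a graded homomorphic image of $L_K(F_B)$ for a \emph{finite} graph $F_B$. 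The strategy is then to prove that $B$ itself is Bézout and that principality inside $B$ transfers upward to principality inside $L_K(E)$.

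First I would set up the reduction precisely. Let $J$ be a finitely generated left ideal of $L_K(E)$, say $J=L_K(E)a_1+\cdots+L_K(E)a_m$. Applying the Subalgebra Construction to the finite set $\{a_1,\ldots,a_m\}$ produces a single subalgebra $B$ containing every $a_r$, by Proposition \ref{Prop1, 2}(1). Next I would verify that $B$ is Bézout. Writing $B=A\oplus K\epsilon_1\oplus\cdots\oplus K\epsilon_n$ as a ring direct sum, the ideal theory of $B$ decomposes over the summands: a finitely generated left ideal of a finite ring direct sum is the direct sum of its components in each factor. Each $K\epsilon_i$ is a copy of the field $K$, which is trivially Bézout. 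The factor $A$ is a homomorphic image of $L_K(F_B)$ with $F_B$ finite, so by the finite-graph Bézout theorem \cite[Theorem 15]{Bezout} together with the fact that the Bézout property passes to homomorphic images (a surjective ring map sends a principal generator to a principal generator of the image ideal), $A$ is Bézout as well. Hence $B$ is a finite direct sum of Bézout rings, which is again Bézout.

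The step I expect to be the main obstacle is the upward transfer: showing that if the ideal generated by $a_1,\ldots,a_m$ \emph{inside} $B$ is principal, then the ideal $J$ they generate \emph{inside} the larger algebra $L_K(E)$ is also principal. The subtlety is that $B$ need not be a unital subalgebra in a way compatible with $L_K(E)$'s full identity behavior, and multiplying the $B$-generator by elements of $L_K(E)\setminus B$ could in principle enlarge the ideal beyond what is controlled inside $B$. The key to overcoming this is that $B$ contains enough local units (the vertex idempotents and the $\epsilon_i$) relative to the generators $a_r$: each $a_r$ satisfies $a_r=u a_r$ for a suitable idempotent $u\in B$, so left-multiplying the principal $B$-generator $d$ by $L_K(E)$ recovers exactly $J$. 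Concretely, I would argue that if $d$ generates $Ba_1+\cdots+Ba_m$ as a left $B$-ideal with $d=\sum x_r a_r$ and each $a_r=y_r d$ for $x_r,y_r\in B$, then the same equations, read inside $L_K(E)$, show $L_K(E)d=J$, so $J$ is principal.

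Finally I would assemble the pieces: every finitely generated left ideal of $L_K(E)$ is principal, and the symmetric argument handles right ideals, giving that $L_K(E)$ is Bézout. The whole proof is thus a clean instance of the local-to-global philosophy emphasized in this note: the finite-graph result is the \emph{local} fact, the directed-union structure of Theorem \ref{Thm4.1, 3} is the bridge, and the \emph{global} Bézout property follows because the Bézout condition only ever involves finitely many elements at a time.
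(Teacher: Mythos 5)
Your proposal is correct and follows essentially the same route as the paper: capture the generators of a finitely generated left ideal inside a single unital subalgebra $B$ supplied by Theorem \ref{Thm4.1, 3}, show $B$ is B\'{e}zout via the finite-graph case, and transfer principality upward exactly as you describe, using $d=\sum_r x_r a_r$ and $a_r=y_r d$ read as equations in $L_K(E)$. The only (minor) difference is that you justify the B\'{e}zout property of $B$ more carefully---through the decomposition $A\oplus K\epsilon_1\oplus\cdots\oplus K\epsilon_n$, the passage of the B\'{e}zout property to homomorphic images, and to finite ring direct sums---whereas the paper simply treats each subalgebra as isomorphic to the Leavitt path algebra of a finite graph and cites \cite[Theorem 15]{Bezout} directly.
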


\begin{proof}
By Theorem \ref{Thm4.1, 3}, $L_{K}(E)$ is the direct limit of unital
subalgebras, each of which is isomorphic to the Leavitt path $K$-algebra of
a finite graph. By \cite[Theorem 15]{Bezout}, each of these unital
subalgebras is a B\'{e}zout subring of $L_{K}(E)$. 

\medskip

Now, we are going to prove that for any ring $R$, if every finite subset of $R$ is contained in a unital B\'{e}zout subring of $R$, 
then $R$ is B\'{e}zout. Let us consider a finitely generated left ideal of $R$ with generators $x_1, x_2, \dots , x_n \in R$. 
Then there is a unital B\'{e}zout subring $S$ of $R$ that contains $\{x_1, x_2, \dots, x_n \}$. Hence, there exists $x \in S$ such that 
the left $S$-ideal $Sx_1 + Sx_2 + \cdots + Sx_n = Sx$.  
 
Since $1_{S}x_{i}=x_{i}$ for all $1\leq i\leq n$, and
each $x_{i}$ is in $Sx_{1}+Sx_{2}+\cdots +Sx_{n}=Sx$ which implies that for
each $i$ there exists $s_{i}\in S$ with $x_{i}=s_{i}x$. 

Hence $Rx_1+Rx_2+\cdots +Rx_n=Rs_{1}x+Rs_{2}x+\cdots +Rs_{n}x\subseteq Rx$. 
Also, $x=1_{s}x\in Sx$ implies $x\in Sx_{1}+Sx_{2}+\cdots +Sx_{n}\subseteq Rx_{1}+Rx_{2}+\cdots
+Rx_{n}$. Therefore, $Rx_1 + Rx_2 + \cdots + Rx_n = Rx$ and $R$ is a B\'{e}zout ring. 

\medskip 
Hence, if $R$ is taken to be $L_K(E)$, the result follows.
\end{proof}

\bigskip 

Recall that a ring with local units $R$ is said to be 
\textit{directly finite} if for every $x,y\in R$ and an idempotent element $
u\in R$ such that $xu=ux=x$ and $yu=uy=y$, we have that $xy=u$ implies $yx=u$.

\bigskip 

\begin{theorem}\cite[Proposition 4.3]{L.Vas}\label{Bora}
$L_{K}(E)$ is directly finite if and only if no cycle in $E$ has an exit. 
\end{theorem}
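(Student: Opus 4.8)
The plan is to prove both directions via the subalgebra construction and the structural dichotomy it provides between cycles with and without exits. The characterization ``no cycle has an exit'' is the condition that separates direct finiteness from its failure, so the strategy is to reduce direct finiteness of $L_K(E)$ to direct finiteness of the finite subalgebras $B_\lambda$ supplied by Theorem \ref{Thm4.1, 3}, and then to analyze when those finite pieces are directly finite. First I would recall that direct finiteness is a statement about finitely many elements $x,y,u$ at a time: since $L_K(E)=\varinjlim B_\lambda$ is a directed union, any such finite collection lies in a single $B_\lambda$, and a directed union of directly finite rings with local units is directly finite. Thus the ``if'' direction reduces to showing each $B_\lambda=A\oplus K\epsilon_1\oplus\cdots\oplus K\epsilon_n$ is directly finite whenever $E$ has no cycle with an exit.

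For that reduction I would use the final remark after Theorem \ref{Thm4.1, 3}: every cycle in the finite graph $F_B$ lifts to a cycle in $E$, and an exit for the cycle in $F_B$ produces an exit for the lifted cycle in $E$. Hence if no cycle in $E$ has an exit, then no cycle in any $F_B$ has an exit either. For a \emph{finite} graph in which no cycle has an exit, the Leavitt path algebra has a known transparent structure: it is a finite direct sum of matrix rings $M_{m_i}(K)$ over $K$ and matrix rings $M_{n_j}(K[t,t^{-1}])$ over the Laurent polynomial ring, the latter coming precisely from the cycles without exits. Both $M_m(K)$ and $M_n(K[t,t^{-1}])$ are directly finite (they are, respectively, finite-dimensional simple algebras and matrix rings over a commutative, hence directly finite, domain), so $L_K(F_B)$ is directly finite. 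Since $A$ is a homomorphic image of $L_K(F_B)$ and a homomorphic image of a directly finite ring with local units is directly finite, and the idempotent summands $K\epsilon_i$ are manifestly directly finite, the direct sum $B_\lambda$ is directly finite. This completes the ``if'' direction.

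For the ``only if'' direction I would argue contrapositively: suppose some cycle $c=e_1\cdots e_m$ in $E$ has an exit. The aim is to exhibit elements witnessing failure of direct finiteness, i.e.\ a one-sided invertible pair that is not two-sided invertible relative to a local unit. Writing $v=s(c)=r(c)$, the exit condition means $v$ emits at least one edge $f\notin\{e_1\}$ (more precisely some vertex on $c$ emits an edge not continuing the cycle). The standard witness is $x=c=e_1\cdots e_m$ together with $y=c^*=e_m^*\cdots e_1^*$ and the local unit $u=v$: one computes $c^*c=v$ using (CK1) repeatedly, while $cc^*\neq v$ precisely because the (CK2) relation at the vertices of $c$ involves the extra exit edges, so $v-cc^*$ is a nonzero idempotent. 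Thus $yx=u$ but $xy\neq u$, contradicting direct finiteness. The key verification here is that the exit guarantees $cc^*\neq v$; this is where the hypothesis is used in an essential way.

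The main obstacle I anticipate is the ``if'' direction's dependence on an accurate structure theorem for finite graphs in which no cycle has an exit, and the care needed to transport direct finiteness across the graded homomorphism $\theta$ and through the direct-sum/direct-limit formations. In particular, one must verify that direct finiteness is genuinely preserved by (i) passing to a homomorphic image, (ii) forming finite ring direct sums with the idempotent summands $K\epsilon_i$, and (iii) taking directed unions, each in the setting of rings with local units rather than unital rings, where one must track the local unit $u$ throughout. The lifting of cycles and exits from $F_B$ to $E$ (quoted after Theorem \ref{Thm4.1, 3}) is exactly what makes the local hypothesis on $E$ propagate to every finite piece $F_B$, so the crux is to combine that lifting statement with the finite-graph structure theorem and then to assemble the pieces via the local-to-global mechanism.
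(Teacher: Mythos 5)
Your overall architecture matches the paper's: the direction ``no cycle has an exit $\Rightarrow$ directly finite'' is proved exactly as in the rephrased \cite[Theorem 3.7]{HRS} argument, by writing $L_K(E)$ as a directed union of the subalgebras $B=A\oplus K\epsilon_1\oplus\cdots\oplus K\epsilon_n$ from Theorem \ref{Thm4.1, 3}, lifting cycles and exits from $F_B$ to $E$, and invoking the structure theorem for finite graphs with no exits. However, there is one genuine gap in how you transport direct finiteness from $L_K(F_B)$ to $A=\mathrm{Im}(\theta)$: you assert that ``a homomorphic image of a directly finite ring with local units is directly finite,'' and this is false. The free algebra $K\langle x,y\rangle$ is a domain, hence directly finite, yet its quotient by the ideal generated by $xy-1$ is the Jacobson algebra, in which $\bar{x}\bar{y}=1$ but $\bar{y}\bar{x}\neq 1$. (Indeed $L_K(E)$ for $E$ a single loop with an exit is itself a non-directly-finite homomorphic image of a Leavitt path algebra.) The paper avoids this trap: it first observes that each summand $M_{n_i}(K)$ and $M_{m_j}(K[x,x^{-1}])$ of $L_K(F_B)$ is a \emph{simple} ring, so any homomorphic image of $L_K(F_B)$ is again a finite direct sum of a subcollection of these matrix rings; only then does it conclude that $A$, and hence $B$, is directly finite. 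You should replace your step (i) by this simplicity argument.

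Two smaller remarks. First, your contrapositive argument for the converse (a cycle $c$ with an exit gives $c^{*}c=v$ while $cc^{*}\neq v$) is correct, but note that the paper never actually proves this direction --- it only quotes \cite[Proposition 4.3]{L.Vas} and reproduces the implications (a)$\Rightarrow$(b)$\Rightarrow$(c) of \cite[Theorem 3.7]{HRS}; so here you are supplying material beyond what the paper contains, and you should make explicit why $v-cc^{*}\neq 0$ (e.g.\ if $f$ is an exit at $r(e_i)$, the path $p=e_1\cdots e_i f$ satisfies $p^{*}cc^{*}=0$ but $p^{*}v=p^{*}\neq 0$). Second, your reduction of direct finiteness to the finite pieces via the directed union, and the direct finiteness of $M_m(K)$, $M_n(K[x,x^{-1}])$ and their finite direct sums, agree with the paper and are sound.
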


\bigskip 

The converse of Theorem \ref{Bora} for Leavitt path algebras of finite graphs has
been proven in \cite[Theorem 3.3]{L.Vas2}. To get the infinite graphs, Lia Vas proved the theorem by using Cohn-Leavitt approach.
In particular, the localization of the graph is used by considering a finite subgraph generated
by the vertices and edges of just those paths that appear in representations
of $x,$ $y$ and $u$ in $L_{K}(E)$ where $xy=u$ for some local unit $u$. However, the subgraph $F$ 
defined in this way may not produce a subalgebra $L_{K}(F)$ of $L_{K}(E)$. This problem is avoided by 
considering  an appropriate finite subgraph $F$ such
that the Cohn-Leavitt algebra of $F$ is a subalgebra of $L_{K}(E)$ and then adapts \cite[Theorem 3.3]{L.Vas2} 
to Cohn-Leavitt algebras of finite graphs.

\bigskip 

An alternative proof using the subalgebra construction is pointed out in \cite[Theorem 3.7]{HRS} using the grading on matrices. We outline the proof below (without considering the grading to refer to Theorem \ref{Bora}).
\begin{theorem}(\cite[Theorem 3.7]{HRS} rephrased)
For an arbitrary graph $E$, the following properties are equivalent for $L_K(E)$:
\begin{itemize}
\item[(a)] No cycle in $E$ has an exit;
\item[(b)]$L_K(E)$ is a directed union of graded semisimple Leavitt path algebras; specifically, $L_K(E)$ is a directed union of
direct sums of matrices of finite order over $K$ or $K[x, x^{-1}]$.
\item[(c)] $L_K(E)$ is directly-finite.
\end{itemize}
\end{theorem}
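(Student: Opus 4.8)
The plan is to prove the cycle of implications $(a)\Rightarrow(b)\Rightarrow(c)\Rightarrow(a)$, leveraging the subalgebra construction of Theorem \ref{Thm4.1, 3} for the first step and Theorem \ref{Bora} (which I am permitted to assume) to close the loop cheaply. The technical heart is $(a)\Rightarrow(b)$; the other two implications are comparatively soft.

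\emph{Proof of $(a)\Rightarrow(b)$.} Suppose no cycle in $E$ has an exit. By Theorem \ref{Thm4.1, 3}, $L_K(E)$ is a directed union of graded subalgebras of the form $B=A\oplus K\epsilon_1\oplus\cdots\oplus K\epsilon_n$, where $A=\theta(L_K(F_B))$ for a finite graph $F_B$. The key structural input is the final paragraph following Theorem \ref{Thm4.1, 3}: every cycle $(f_1,f_2)(f_2,f_3)\cdots(f_n,f_1)$ in $F_B$ corresponds to a cycle $f_1f_2\cdots f_n$ in $E$, and an exit for the cycle in $F_B$ produces an exit for the corresponding cycle in $E$. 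Since by hypothesis no cycle in $E$ has an exit, the same must hold for $F_B$: \emph{no cycle in the finite graph $F_B$ has an exit}. The plan is then to invoke the known structure theorem for Leavitt path algebras of finite graphs in which no cycle has an exit, which gives $L_K(F_B)\cong\bigoplus_i M_{n_i}(K)\oplus\bigoplus_j M_{m_j}(K[x,x^{-1}])$ as graded algebras (the matrix rings over $K$ arising from acyclic components and those over $K[x,x^{-1}]$ from the exit-free cycles). Each such matrix ring is graded semisimple, and since $A$ is a graded homomorphic image of $L_K(F_B)$ while the $K\epsilon_i$ are copies of $K$, the subalgebra $B$ is again a finite direct sum of matrix rings of the stated two types. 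Thus $L_K(E)$ is a directed union of such algebras, which is exactly $(b)$.

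\emph{Proof of $(b)\Rightarrow(c)$.} Each building block $M_{n}(K)$ and $M_{m}(K[x,x^{-1}])$ is directly finite: finite matrix rings over a commutative domain (or over a division ring) are directly finite, since the existence of a two-sided inverse $xy=u$ forces $yx=u$ by a rank or determinant argument. I would then argue that direct finiteness passes to directed unions: given $x,y$ and a local unit $u$ with $xu=ux=x$, $yu=uy=y$, and $xy=u$, all three elements and the idempotent $u$ lie in some member $B_\lambda$ of the directed union; applying direct finiteness inside $B_\lambda$ yields $yx=u$. Hence $L_K(E)$ is directly finite.

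\emph{Proof of $(c)\Rightarrow(a)$.} This is immediate from Theorem \ref{Bora}, which asserts that $L_K(E)$ is directly finite if and only if no cycle in $E$ has an exit; the forward direction of that equivalence gives exactly $(a)$.

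I expect the main obstacle to be the precise bookkeeping in $(a)\Rightarrow(b)$: one must verify carefully that the correspondence between cycles of $F_B$ and cycles of $E$ transfers the ``no exit'' condition in the right direction (i.e.\ that an exit in $F_B$ really does lift to an exit in $E$, so that absence of exits downstairs forces absence upstairs), and then that the graded matrix structure of $L_K(F_B)$ survives passage to the homomorphic image $A$ and the direct sum with the $K\epsilon_i$. The remaining two implications are routine once the right structural and permanence statements are cited.
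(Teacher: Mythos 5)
Your proposal is correct and follows essentially the same route as the paper: $(a)\Rightarrow(b)$ via Theorem \ref{Thm4.1, 3} together with the cycle-lifting remark after it and the structure theorem for finite graphs with no exit-admitting cycles, $(b)\Rightarrow(c)$ via direct finiteness of the matrix blocks passing to directed unions, and $(c)\Rightarrow(a)$ deferred to Theorem \ref{Bora} exactly as the paper does.
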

\begin{proof}
(a) implies (b) 
Assume (a). By Theorem \ref{Thm4.1, 3}, $L_K(E)$ is a directed union of graded subalgebras 
$B=A\oplus K\epsilon _{1}\oplus \cdots \oplus K\epsilon _{n}$, where $A$ is the image of a graded homomorphism 
$\theta$ from a Leavitt path algebra $L_K(F_B)$ to $L_K(E)$ with $F_B$ a finite
graph depending on $B$. Moreover, any cycle with an exit in $F_B$ gives rise to a cycle with an exit in $E$.
Since no cycle in $E$ has an exit, no cycle in the finite graph $F_B$ has an exit. So by using \cite[Theorem 2.7.3]{AAS},
$$L_K(F_B) \cong \bigoplus_{i \in I} M_{n_i}(K) \oplus \bigoplus_{j \in J} M_{m_j}(K[x, x^{-1}]),$$ where $n_i$ and $m_j$ are positive integers
 $I$, $J$ are index sets. Since the matrix rings $M_{n_i}(K)$ and $M_{m_j}(K[x, x^{-1}])$  are simple rings, $A$ and hence $B$ is a direct sum of finitely many matrix rings of finite order over $K$ and/or $K[x,x^{-1}]$. This proves (b).

(b) implies (c) follows from the known fact that matrix rings $M_{n_i}(K)$ and $M_{m_j}(K[x, x^{-1}])$ are
directly-finite and finite ring direct sums of such matrix rings are directly-finite. 
Hence, by condition (b), $L_K(E)$ is directly-finite.
\end{proof}

\bigskip

We want to finish the survey with another application of the Subalgebra
Construction. In \cite{GonBroxMer}, the authors do not use the exact
results, however they carry the same techniques and proofs to another
subgraph (dual graph) construction.

The authors present the notion of a dual of a subgraph in a graph, which is
the generalization of the usual notion of dual graph found in the literature
that we quote here:

\bigskip

\textbf{Usual dual:} Let $E$ be an arbitrary graph. The \textit{usual dual}
of $E$, $D(E),$ is the graph formed from $E$ by taking 
\begin{eqnarray*}
D(E)^{0} &=&\{e~|~e\in E^{1}\} \\
D(E)^{1} &=&\{ef~|~ef\in E^{2}\} \\
s_{D(E)}(ef) &=&e,~~r_{D(E)}(ef)=f\text{ \ for all }ef\in E^{2}.
\end{eqnarray*}%
The interest on the usual dual graph notion in the context of Leavitt path
algebras lies on the fact that, if $E$ is a row-finite graph without sinks,
then there is an algebra isomorphism $L_{K}(E)\cong L_{K}(D(E))$ 
(\cite[Proposition 2.11]{AALP}). These statement is untrue for usual dual of a graph with
sinks. The authors propose a new definition of dual graph which generalizes
this important property to row-finite graphs with sinks.

Dual of $F$ in $E$: Let $E$ be a graph and let $F$ be a subgraph of $E.$
Denote $F_{1}^{0}=\{v\in F^{0}~|~s_{F}^{-1}(v)=\emptyset
\},~F_{1}^{1}=r_{F}^{-1}(F_{1}^{0})$ and $F_{2}^{0}=s(F^{1})\cap
s(E^{1}\backslash F^{1}),~F_{2}^{1}=r_{F}^{-1}(F_{2}^{0}).$ The graph $
D_{E}(F),$ the \textit{dual of }$F$ \textit{in} $E$ is defined by
\begin{eqnarray*}
D_{E}(F)^{0} &=&D(F)^{0}\cup F_{1}^{0}\cup F_{2}^{0} \\
D_{E}(F)^{1} &=&D(F)^{1}\cup F_{1}^{1}\cup F_{2}^{1} \\
s_{D_{E}(F)}|_{D(F)} &=&s_{D(F)},~r_{D_{E}(F)}|_{D(F)}=r_{D(F)}
\end{eqnarray*}
For all $e\in F_{i}^{1}$ with $i\in \{1,2\},s_{D_{E}(F)}=e\in
D(F)^{0},~r_{D_{E}(F)}(e)=r_{F}(e)\in F_{i}^{0}.$

\bigskip

\textbf{Dual graph:} Given a graph $E$, they define $d(E)=D_{E}(E)$ and call
it the dual graph of $E$.

Then they prove the graded algebra isomorphism $L_{K}(d(E))\cong L_{K}(E)$
when $E$ is a row-finite graph (\cite[Proposition 3.6]{GonBroxMer}). In
this paper the authors also prove that for a graph $E$ and a row-finite
subgraph of $E$ there is a graded monomorphism 
$\theta:L_{K}(D_{E}(F))\rightarrow L_{K}(E)$. In addition, $F^{0}\cup
F^{1}\subseteq \theta (L_{K}(D_{E}(E))).$ This result is stated as \cite[Proposition 3.8]{GonBroxMer} 
and the proof is basically rephrasing \cite[Proposition 1,2]{RegCond}.

\bigskip

\end{document}